\newcommand {\PP}{\mathbb{P}}
\DeclareMathOperator{\codim}{codim}
\DeclareMathOperator{\pnt}{\raise 0.5mm \hbox{\large\bf.}}
\newtheorem{theorem}{Theorem}[section]
\newtheorem{lemma}[theorem]{Lemma}
\newtheorem{corollary}[theorem]{Corollary}
\theoremstyle{definition}
\newtheorem{remark}[theorem]{Remark}
\newtheorem{problem}[theorem]{Problem}
\begin{document}

\title{Glicci ideals}

\author[Juan Migliore]{Juan Migliore${}^*$}
\address{
Department of Mathematics \\
 University of Notre Dame \\
  Notre Dame,
IN 4655\\
 USA}
\email{Juan.C.Migliore.1@nd.edu}
\author[Uwe Nagel]{Uwe Nagel${}^{+}$}
\address{Department of Mathematics \\
University of Kentucky \\
715 Patterson Office Tower \\ Lexington, KY 40506-0027\\
 USA}
\email{uwenagel@ms.uky.edu}

\begin{abstract}
  A central problem in liaison theory is to decide whether every arithmetically Cohen-Macaulay subscheme of projective $n$-space can be linked by a finite number of arithmetically  Gorenstein schemes to a complete intersection. We show that this can be indeed achieved if the given scheme is also generically Gorenstein and we allow the links to take place in an $(n+1)$-dimensional projective space. For example, this result applies to all reduced arithmetically Cohen-Macaulay subschemes. We also show that every union of fat points in projective 3-space can be linked in the same space to a union of simple points in finitely many steps, and hence to a complete intersection in projective 4-space.
\end{abstract}

\thanks{${}^*$ The work for this paper was done while the first
author was sponsored by the National Security Agency under Grant
Numbers H98230-09-1-0031 and H98230-12-1-0204, and by the Simons Foundation under grant \#208579.\\
${}^+$ The work for this paper was done while the second author was
sponsored by the National Security Agency under Grant
Numbers H98230-09-1-0032 and H98230-12-1-0247, and by the Simons Foundation under grant \#208869.
}

\maketitle

\section{Introduction}
\label{sec-intro}

A central problem is liaison theory has always been to describe the ideals/subschemes that are linked to a complete intersection in a finite number of steps. When the links are all by complete intersections (CI-liaison) these are named {\em licci ideals}, whereas when the links are Gorenstein (G-liaison) they are dubbed {\em glicci ideals}. It is well-known that each glicci (and in particular licci) ideal is Cohen-Macaulay. However, the converse is false for CI-liaison. There are many arithmetically Cohen-Macaulay schemes that are not licci, for instance a set of four points in $\mathbb P^3$ or a fat point in $\mathbb P^3$ (see \cite{HU-Annals} and \cite{PU}, respectively).  These two examples are, however, glicci.
In \cite{KMMNP} the question was raised of whether the converse is true in general for G-liaison, i.e., whether every Cohen-Macaulay  ideal is glicci. While there are many affirmative partial results in this direction (see, e.g., \cite{KMMNP}, \cite{CDH}, \cite{Compositio}, \cite{HSS}, \cite{Gorla},  \cite{NR}), there has also been scepticism. Most notably, \cite{Hartshorne} and \cite{HSS} discuss certain sets of points which cannot be shown to be glicci with current methods (see Remark \ref{rem:Hartshorne-exa}). In this note we propose to shift focus by enlarging the ambient space of the embedded subschemes.  Henceforth, ``link" will be understood to mean by Gorenstein ideals (but recall that in codimension two, all Gorenstein ideals are complete intersections).

Let $X \subset \PP^n$ be a closed subscheme. Considering $\PP^n$ as a hyperplane of some $\PP^{n+1}$, we can view $X$ as a subscheme of $\PP^{n+1}$. If $X$ is any finite set of points, then we will see that $X$ is glicci as a subscheme of $\PP^{n+1}$. In fact, much more is true. A special case of our Theorem \ref{thm-CM-with-G0} below is:

\begin{theorem}
  \label{thm-red-lift}
Let $X \subset \PP^n$ be a reduced arithmetically Cohen-Macaulay subscheme.  Considering $X$ as a subscheme of $\PP^{n+1}$, $X$ can be linked in $\PP^{n+1}$ to a complete intersection in $\PP^{n+1}$.
\end{theorem}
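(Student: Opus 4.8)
The plan is to work in the polynomial ring $S = k[x_0,\dots,x_{n+1}]$ of $\PP^{n+1}$, where $X$ has saturated ideal $J = I_X S + (x_{n+1})$ and codimension $c+1$, with $c = \codim_{\PP^n} X$, and to produce a finite chain of Gorenstein links from $J$ down to a complete intersection. The one structural fact I would extract from the hypotheses is that a reduced scheme is generically Gorenstein: at each minimal prime of $R/I_X$ the localization is a field, hence Gorenstein, so $X$ is arithmetically Cohen-Macaulay and $G_0$. It is this $G_0$ property, rather than reducedness itself, that drives the argument.

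The overall strategy I would use is a descending induction on $\deg X$, where each step is a single Gorenstein link. The motivating picture is that $X$ is a divisor on an arithmetically Gorenstein scheme in $\PP^{n+1}$: if $C \subset \PP^n$ is a complete intersection of codimension $c$ containing $X$, its cone $\widehat C = V(f_1,\dots,f_c) \subset \PP^{n+1}$ is a complete intersection (hence arithmetically Gorenstein) of codimension $c$ containing $X$, and every hypersurface section $\widehat C \cap V(g)$ is again a complete intersection. When $\dim X \ge 2$ one can hope to place $X$ as a Cartier divisor on a \emph{smooth} complete intersection $A$ of dimension $\ge 3$ through $X$ — the extra variable furnishes the room to find such an $A$ — so that Grothendieck-Lefschetz gives $\operatorname{Pic}(A) = \Z\cdot H$, forcing $X$ to be linearly equivalent to a hypersurface section of $A$, i.e.\ to a complete intersection, whence the conclusion follows from a single elementary Gorenstein biliaison. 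The uniform mechanism, valid in all dimensions, is instead the following: at each stage I would produce an arithmetically Gorenstein scheme $G$ with $X \subset G$ of the same codimension $c+1$ and with $\deg X < \deg G < 2\deg X$; then the residual $X'$ defined by $I_{X'} = I_G : J$ is again arithmetically Cohen-Macaulay, is again $G_0$ (at a generic point of $X'$ the scheme agrees with the Gorenstein scheme $G$), and has $\deg X' = \deg G - \deg X < \deg X$. Iterating, and checking that the process can terminate only at a complete intersection, then finishes the proof.

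The essential new input — and the step I expect to be the main obstacle — is the construction of the intermediate arithmetically Gorenstein scheme $G$ with the bound $\deg X < \deg G < 2\deg X$. Here the two hypotheses combine: because $X$ is $G_0$ its canonical module $\omega_X$ is generically free of rank one, which is exactly what is needed to build a Gorenstein scheme around $X$ from a suitable section of a twist of $\omega_X$ (an idealization-type construction), while the extra variable $x_{n+1}$ supplies the geometric room to realize such a $G$ with its degree controlled below $2\deg X$. In $\PP^n$ no such $G$ need exist — this is precisely the known obstruction to glicci-ness in codimension $\ge 3$ — so enlarging the ambient space is what makes the inductive step available.

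Two further points require care. First, the low-dimensional base of the induction, most notably the case of reduced points, where the Picard-group shortcut is unavailable and the existence of the degree-controlled scheme $G$ must be established by a direct analysis of the configuration using $x_{n+1}$. Second, one must verify throughout that generic Gorenstein-ness, and not merely Cohen-Macaulayness, propagates to every residual in the chain, since it is this property that keeps the construction of $G$ available at the next stage.
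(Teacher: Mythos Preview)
You correctly isolate the structural input: reducedness is used only to conclude that $X$ is generically Gorenstein, and it is the $G_0$ property that lets one realize the canonical module of the cone $\widehat X = V(I_X S)\subset\PP^{n+1}$ as an ideal $J/I_X S$, producing a Gorenstein subscheme $V(J)$ of the right codimension. Up to this point you and the paper agree.

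Where you diverge is in what to do with this Gorenstein scheme. The paper performs \emph{one} link: the ideal $I_X S + tJ$ (with $t=x_{n+1}$) is Gorenstein, and one checks $(I_X S + tJ):(I_X S,t)=J$, so $X$ is directly G-linked in $\PP^{n+1}$ to the Gorenstein scheme $V(J)$. The proof then ends by citing the theorem of Casanellas--Drozd--Hartshorne that every arithmetically Gorenstein subscheme is glicci. No induction on degree, no degree bound on the linking scheme, and no need to track $G_0$ through a chain of residuals.

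Your proposed route instead asks for an arithmetically Gorenstein $G$ with $\deg X<\deg G<2\deg X$ at every stage, and this is where the gap lies. You acknowledge this bound as ``the main obstacle'' but do not establish it; the canonical-module construction gives \emph{some} Gorenstein $G\supset X$, not one whose degree you control below $2\deg X$. More seriously, your heuristic that ``the extra variable $x_{n+1}$ supplies the geometric room'' applies only to the first step: once you link, the residual $X'$ is a subscheme of $\PP^{n+1}$ that need not lie in any hyperplane, so there is no further extra variable to exploit, and you are back to the original open problem in the ambient $\PP^{n+1}$. Your inductive hypothesis (that $X'$ is again ACM and $G_0$ in $\PP^{n+1}$) is exactly the hypothesis of the theorem you are trying to prove, now with no spare dimension. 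The paper's use of the Casanellas--Drozd--Hartshorne result is precisely what short-circuits this difficulty: after one link the residual is Gorenstein, and for Gorenstein schemes glicci-ness is already known.
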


This motivates the following:

\begin{problem}
  \label{prob-contains-red}
{\em Let $X \subset \PP^n$ be an arithmetically Cohen-Macaulay subscheme. Can $X$ be G-linked in $\mathbb P^n$ to a reduced subscheme in a finite number of steps?  Is it at least true if we allow the links to take place in $\mathbb P^{n+1}$?}
\end{problem}

Of course, combined with Theorem \ref{thm-red-lift},  an affirmative answer would imply that every arithmetically Cohen-Macaulay subschema of $\PP^n$ can be linked to a complete intersection in some higher-dimensional projective space.

Another  motivation for the above problem is the result of Rao \cite{rao-invent} that says that every even liaison class of curves in $\mathbb P^3$ contains a smooth element.  On the other hand, in higher codimension we have the striking result of Polini and Ulrich \cite{PU} that says that in some situations, there is a non-reduced locus that is contained in every element of the CI-liaison class.  For example, if $Z$ is any  union of (non-reduced) fat points in $\mathbb P^n$, then $Z$ is a subscheme of every element of its even liaison class, which hence contains no reduced element.  Nevertheless, it was shown in \cite{KMMNP} that a single fat point (as a very special case of a deeper result) can be G-linked in a finite number of steps to a single point.  This result will be generalized in this paper to an arbitrary union of fat points in $\mathbb P^3$, even of mixed multiplicities.

The technique that proves Theorem \ref{thm-red-lift} is not limited to reduced subschemes.
As example, we discuss monomial ideals. In \cite{Compositio}, we showed that strongly stable artinian ideals are glicci. Using a similar technique, Huneke-Ulrich \cite{HU} proved that every monomial artinian ideal is glicci in the localization with respect to the homogeneous maximal ideal. This result is not useful for applications when one is interested in information about the Hilbert function. Thus, it is still an important open problem whether each such ideal can be linked homogenously to a complete intersection. Again, we have an affirmative answer if we pass to a larger polynomial ring.

\begin{theorem}
  \label{thm-monomial-ideal}
Let $I \subset R := K[x_0,\ldots,x_n]$ be a monomial  ideal such that $R/I$ is Cohen-Macaulay. Let $t$ be a new variable. Then the ideal $(I, t) \subset R[t]$ is glicci in $R[t]$.
\end{theorem}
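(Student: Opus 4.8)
The plan is to exhibit $(I,t)$ as the end of a chain of basic double G-links terminating at a complete intersection, inducting on a combinatorial measure of the complexity of $I$ (for instance the multiplicity $e(R/I)$, or $\sum_m(\deg m - 1)$ summed over a minimal monomial generating set). The decisive point of passing to $R[t]$ and working with $(I,t)$ rather than with $I$ itself is that it lets me use \emph{complete intersection} divisors in each basic double link: a monomial complete intersection is arithmetically Gorenstein, so such links are automatically Gorenstein links and I never need $R/I$ to be generically Gorenstein. This is precisely the feature that is unavailable for a general monomial ideal inside $R$, and it is what makes the analogue of Theorem \ref{thm-CM-with-G0} go through without the $G_0$ hypothesis.

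Concretely, write $c = \codim_R I$, so that $R[t]/(I,t)\cong R/I$ is Cohen-Macaulay of codimension $c+1$ in $R[t]$. Using that $R/I$ is Cohen-Macaulay, I would first select a variable $x_i$ together with monomials $g_1,\dots,g_{c-1}\in I$ that form a regular sequence for which $x_i,g_1,\dots,g_{c-1}$ is again a regular sequence in $R$ and
\[
 I = x_i\,(I:x_i) + (g_1,\dots,g_{c-1}).
\]
Setting $\mathfrak{a} = (t,g_1,\dots,g_{c-1})$, this is a monomial complete intersection of codimension $c$ contained in $(I,t)$ on which $x_i$ is a nonzerodivisor, and one verifies the basic double link identity
\[
 (I,t) = x_i\cdot\bigl((I:x_i),\,t\bigr) + \mathfrak{a}.
\]
By basic double G-linkage, $(I,t)$ is then G-bilinked to $J:=\bigl((I:x_i),\,t\bigr)$, and because the divisor $\mathfrak{a}$ is a complete intersection both links are genuine Gorenstein links. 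Since $R[t]/J\cong R/(I:x_i)$, the induction hypothesis applies to the monomial ideal $I':=I:x_i$ once it is known to be again Cohen-Macaulay of codimension $c$ and of strictly smaller complexity, giving that $J$, and hence $(I,t)$, is glicci. The base case is that $I$ is itself a monomial complete intersection, where $(I,t)$ is a complete intersection in $R[t]$ and nothing remains to prove.

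The main obstacle is the combinatorial selection inside the inductive step: one must show that for every monomial ideal $I$ with $R/I$ Cohen-Macaulay but not a complete intersection there exist a variable $x_i$ and a monomial regular sequence $g_1,\dots,g_{c-1}\in I$ satisfying the two displayed conditions, and that the resulting $I:x_i$ is Cohen-Macaulay of the same codimension $c$ with strictly smaller complexity, so that the induction terminates. The Cohen-Macaulayness of $I:x_i$ is in fact forced by the basic double linkage correspondence, which preserves the arithmetically Cohen-Macaulay property in both directions, once the identity $(I,t)=x_i J+\mathfrak{a}$ is in place; hence the crux is purely the existence of the data $(x_i;g_1,\dots,g_{c-1})$ and the verification that it strictly decreases the chosen measure. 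Here the extra variable $t$ is indispensable: it furnishes one generator of the complete intersection $\mathfrak{a}$, which is exactly what allows $\mathfrak{a}$ to attain the correct codimension $c$ while $J$ stays strictly simpler. Balancing these codimensions is impossible in $R$ itself, reflecting the fact that the homogeneous glicci problem for monomial ideals in $R$ remains open.
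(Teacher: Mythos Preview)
Your inductive step rests on the existence, for every Cohen--Macaulay monomial ideal $I$ of codimension $c$ that is not a complete intersection, of a variable $x_i$ and monomials $g_1,\dots,g_{c-1}\in I$ with $x_i,g_1,\dots,g_{c-1}$ a regular sequence and
\[
I \;=\; x_i\,(I:x_i) + (g_1,\dots,g_{c-1}).
\]
You flag this as the ``main obstacle'' but proceed as if it can be arranged. It cannot. Take $I=(x_0,x_1,x_2)^2\subset K[x_0,x_1,x_2]$, which is artinian of codimension $3$. For any variable, say $x_0$, one has $x_0(I:x_0)=(x_0^2,x_0x_1,x_0x_2)$, so modulo $x_0$ you would need $(\bar g_1,\bar g_2)=(x_1,x_2)^2$ in $K[x_1,x_2]$. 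But $(x_1,x_2)^2$ is not a complete intersection (colength $3$, while any $(\bar g_1,\bar g_2)$ with $\bar g_j\in (x_1,x_2)^2$ has colength $\ge 4$). The same obstruction rules out non-monomial $g_j$. So the decomposition you need simply fails already for $\mathfrak m^2$, and with it the proposed basic double link; the scheme does not get off the ground.

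The paper avoids this entirely by a different mechanism: it lifts $I$ to a \emph{reduced} ideal $J\subset R[t]$ (the distraction/lifting of monomial ideals) satisfying $(J,t)=(I,t)$ and $J:t=J$. Reducedness gives generic Gorensteinness of $R[t]/J$, so Lemma~\ref{lem-key} applies with $f=t$: $(J,t)=(I,t)$ is directly G-linked to a Gorenstein ideal, hence glicci. No induction and no combinatorial selection are needed; the extra variable is used to produce a radical lift rather than to pad a complete-intersection divisor.
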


Since $R/I \cong R[t]/(I, t)$, this result should be as useful for applications as an affirmative answer to the original question in $R$ would be.

We  remark that analogous results hold for ideals in regular local rings.


\section{Reduced schemes}

Let $R = K[x_0,\ldots,x_n]$ be a polynomial ring over a field $K$ with its  standard grading.  The basis of our results is the following observation.

\begin{lemma}
  \label{lem-key}
Let $I \subset R$ be a homogeneous ideal such that $R/I$ is Cohen-Macaulay and generically Gorenstein.  If $f \in R$ is a homogeneous nonzero divisor of $R/I$, then the ideal $(I, f) \subset R$ is glicci.
\end{lemma}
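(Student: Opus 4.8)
The plan is to prove the stronger, induction-friendly statement that every ideal of the form $I+\gb$, where $R/I$ is Cohen--Macaulay and generically Gorenstein of codimension $c$ and $\gb=(g_1,\dots,g_k)$ is a regular sequence on $R/I$, is glicci; the lemma is the case $k=1$, $\gb=(f)$. I would induct on the codimension $c$ of the \emph{base} ideal $I$. When $c=0$ we have $I=(0)$ and $I+\gb=(g_1,\dots,g_k)$ is a complete intersection, which is glicci, giving the base case. The codimension of the scheme itself stays equal to $c+k$ throughout, so each inductive step will trade one unit of codimension of the base for one more element of the regular sequence, and after $c$ steps the base disappears and a complete intersection is reached.

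The heart of the matter, and the only place the generically Gorenstein hypothesis is used, is the manufacture of an auxiliary arithmetically Gorenstein ideal out of $I$. Because $A:=R/I$ is Cohen--Macaulay and generically Gorenstein, its canonical module $\omega_A$ is isomorphic, up to a degree twist, to a height-one ideal $\mathfrak d\subseteq A$, and for such a canonical ideal the quotient $A/\mathfrak d$ is Gorenstein of dimension $\dim A-1$. Lifting $\mathfrak d$ to $R$ produces a homogeneous ideal $\mathfrak D\supseteq I$ with $R/\mathfrak D$ arithmetically Gorenstein of codimension $c+1$ and with $\mathfrak D/I\cong\omega_A$ up to twist. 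This is exactly the input that basic double G-linkage needs but that a general Cohen--Macaulay ideal cannot supply: generic Gorenstein-ness is precisely the condition that lets $\omega_A$ be realized as an ideal, and hence interposes a Gorenstein quotient of codimension $c+1$ between $I$ and the maximal ideal.

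For the inductive step I would combine $\mathfrak D$ with the regular sequence to build an arithmetically Gorenstein ideal $\mathfrak a$ of codimension $c+k$ (for instance $\mathfrak D+(g_2,\dots,g_k)$, which stays Gorenstein because adjoining a regular sequence to a Gorenstein quotient keeps it Gorenstein), and then feed the pair $\mathfrak a\subseteq I+\gb$ into basic double G-linkage, G-bilinking $R/(I+\gb)$ to the scheme defined by $g_1\cdot(I+\gb)+\mathfrak a$. The goal is to recognize this residual scheme as one of the same form $I'+\gb'$ with $R/I'$ Cohen--Macaulay and generically Gorenstein of codimension $c-1$ and $\gb'$ a regular sequence of length $k+1$, closing the induction; here the adjunction identity $\omega_{R/(I+\gb)}\cong(\omega_A/\gb\,\omega_A)(\text{twist})$ is what controls the residual in terms of $\mathfrak D$ and $\gb$. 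The main obstacle is precisely this identification: showing that the basic double link strictly lowers the codimension of the generically Gorenstein base while preserving both Cohen--Macaulayness and generic Gorenstein-ness of that base is the crux, and it is where the structure of the canonical ideal must be analyzed in detail. A secondary but unavoidable technical point is genericity --- running the double link requires the relevant forms to be nonzerodivisors on $R/\mathfrak D$ as well, which forces me to choose the effective representative $\mathfrak d$ of the canonical class so as to avoid the finitely many associated primes of $R/\mathfrak D$, exploiting the freedom available in that linear system.
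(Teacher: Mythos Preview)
Your proposal isolates the right key object---the Gorenstein ideal $\mathfrak D\supset I$ of codimension $c+1$ obtained by realizing the canonical module of $A=R/I$ as an ideal---but the way you try to use it does not work, and the inductive superstructure is unnecessary.

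The basic double link you describe cannot be set up. You assert $\mathfrak a=\mathfrak D+(g_2,\dots,g_k)\subseteq I+\gb$, but $\mathfrak D\supseteq I$, not $\subseteq$, and there is no reason for $\mathfrak D$ to sit inside $I+(g_1,\dots,g_k)$. Even ignoring the containment, a basic double G-link on the codimension-$(c+k)$ ideal $I+\gb$ requires an arithmetically Cohen--Macaulay ideal of codimension $c+k-1$ beneath it, whereas your $\mathfrak a$ has codimension $c+k$. Finally, if one computes $g_1\,(I+\gb)+\mathfrak a$ formally, one gets $\mathfrak D+(g_1^2,g_2,\dots,g_k)$, whose ``base'' $\mathfrak D$ has codimension $c+1$, not $c-1$; so the step does not move the induction in the direction you claim.

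The paper's argument is a single direct G-link, with no induction. Using your $\mathfrak D$ (their $J$), one observes via \cite[Lemma~4.8]{KMMNP} that $I+fJ$ is itself Gorenstein of codimension $c+1$, and then a two-line colon computation gives
\[
(I+fJ):(I,f)=(I+fJ):f=(I:f)+J=J,
\]
since $f$ is a nonzerodivisor on $R/I$. Hence $(I,f)$ is G-linked in one step to the Gorenstein ideal $J$. The proof ends by invoking the theorem of Casanellas--Drozd--Hartshorne \cite{CDH} that every homogeneous Gorenstein ideal is glicci. This last citation is precisely the shortcut your plan is missing: once a single link lands on a Gorenstein ideal, there is no need to unwind all the way down to a complete intersection by hand.
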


\begin{proof}
The assumptions on $A := R/I$ guarantee that its canonical module $K_A$ can be identified with an ideal of $A$ (cf., for example, \cite{BrHe}, Proposition 3.3.18). It follows that there is a Gorenstein ideal $J \subset R$ containing $I$ such that $\codim J = \codim I + 1$ and $K_A (- t) \cong J/I$ for some integer $t$. We claim that the ideal $(I, f)$ is directly G-linked to $J$.

Indeed, by \cite[Lemma 4.8]{KMMNP}, the ideal $I + f J$ also is a Gorenstein ideal with the same codimension as $J$. Thus it suffices to show that
\begin{equation*}
  \label{eq-ideal-quotient}
(I + f J) : (I, f) = J.
\end{equation*}
However, since $I : f = I$ this follows easily because
\[
(I + f J) : (I, f) = (I + f J) : f = (I : f) + J = J.
\]
(The second equality follows by a standard argument.)
We have shown that $(I, f)$ is directly G-linked to the Gorenstein
ideal  $J$. Therefore $(I, f)$ is glicci because every Gorenstein
ideal is glicci by \cite{CDH}. Note that the proof in \cite{CDH} is for subschemes of $\PP^n$. However, by using graded modules and local cohomology instead of sheaves, the arguments can be adapted so that they also apply to artinian ideals.
\end{proof}

An immediate consequence is the following:

\begin{corollary} \label{cor of lem-key}
Let $V_1$ be a generically Gorenstein, arithmetically Cohen-Macaulay subscheme of $\mathbb P^n$ and let $V_2$ be a complete intersection that meets $V_1$ properly, that is, $\dim ( V_1 \cap V_2) = \dim V_1 + \dim V_2 - n$. Then $V_1 \cap V_2$ is glicci as a subscheme of $\mathbb P^n$.
\end{corollary}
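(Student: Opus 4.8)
The plan is to derive the statement from Lemma \ref{lem-key} by induction on $c := \codim V_2$. Write $I_1 \subset R$ for the saturated ideal of $V_1$, so that $A := R/I_1$ is Cohen-Macaulay and generically Gorenstein, and write $V_2 = V(f_1, \dots, f_c)$ where $f_1, \dots, f_c$ is a regular sequence in $R$. The properness hypothesis $\dim(V_1 \cap V_2) = \dim V_1 + \dim V_2 - n$ is exactly the statement that $f_1, \dots, f_c$ remains a regular sequence on $A$; in particular each $f_i$ is a nonzerodivisor modulo $I_1$ together with the previously chosen forms, and the ideal of $V_1 \cap V_2$ is $I_1 + (f_1, \dots, f_c)$.

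The base case $c = 1$ is immediate: here $V_1 \cap V_2 = V(I_1, f_1)$ with $f_1$ a nonzerodivisor on the Cohen-Macaulay, generically Gorenstein ring $A$, so Lemma \ref{lem-key} applies verbatim. For the inductive step I would set $V_1' := V_1 \cap V(f_1) = V(I_1, f_1)$ and observe that $R/(I_1, f_1)$ is again Cohen-Macaulay, being the quotient of the Cohen-Macaulay ring $A$ by the nonzerodivisor $f_1$. The forms $f_2, \dots, f_c$ then constitute a regular sequence on $R/(I_1, f_1)$ and cut out a complete intersection meeting $V_1'$ properly, so that, \emph{provided $V_1'$ is again generically Gorenstein}, the induction hypothesis applied to $V_1'$ yields that $V_1' \cap V(f_2, \dots, f_c) = V_1 \cap V_2$ is glicci.

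The one genuinely delicate point, and the main obstacle, is the emphasized proviso: a proper hypersurface section of a generically Gorenstein scheme need not be generically Gorenstein. Indeed, ``Gorenstein at the minimal primes'' is only a codimension-zero condition on $A$, and passing to $A/f_1A$ replaces the minimal primes of $A$ by the height-one primes lying over $f_1$; thus $V_1'$ fails to be generically Gorenstein precisely when $V(f_1)$ contains a codimension-one component of the non-Gorenstein locus $N \subset V_1$. To control this I would exploit that $N$ is a \emph{proper} closed subscheme of $V_1$ (because $V_1$ is generically Gorenstein) and that the canonical module $K_A$, which governs the Gorenstein locus, is a maximal Cohen-Macaulay $A$-module of full support $\Supp K_A = \Spec A$; for a general form $f_1$ of suitable degree no component of $V_1 \cap V(f_1)$ then lies in $N$, so $V_1'$ is generically Gorenstein and the induction runs. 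Replacing the given complete intersection by a general one of the same multidegree is harmless for the intended applications, and with more care one can retain the given $V_2$ by performing instead a single direct link: one checks that $V_1 \cap V_2$ is G-linked, through the Gorenstein ideal $(I_1 + f_1 J) + (f_2, \dots, f_c)$, to $J + (f_2, \dots, f_c)$, where $J \supset I_1$ is the Gorenstein ideal with $J/I_1 \cong K_A(-t)$ produced in the proof of Lemma \ref{lem-key}. Since $J$ is Gorenstein and $f_2, \dots, f_c$ is a regular sequence on $R/J$, the target is again Gorenstein, hence glicci by \cite{CDH}; the remaining work in this variant is to verify that $f_2, \dots, f_c$ meets the auxiliary scheme $V(J)$ properly, which is once more a general-position statement settled using that $J/I_1$ is maximal Cohen-Macaulay over $A$.
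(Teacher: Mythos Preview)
The paper gives no proof at all: it simply declares the corollary an ``immediate consequence'' of Lemma~\ref{lem-key}. Your base case $c=1$ is exactly Lemma~\ref{lem-key}, and you are right that the na\"{\i}ve induction on $c$ runs into a genuine obstacle: cutting $V_1$ by $f_1$ need not preserve the generically Gorenstein hypothesis, since the minimal primes of $R/(I_1,f_1)$ correspond to height-one primes of $R/I_1$ containing $f_1$, about which the hypothesis on $V_1$ says nothing. In this sense you have been more careful than the paper.

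That said, neither of your two workarounds proves the corollary \emph{as stated}. Replacing $V_2$ by a general complete intersection of the same multidegree yields only a weaker statement. Your direct-link variant, linking $(I_1,f_1,\ldots,f_c)$ to $(J,f_2,\ldots,f_c)$ through $(I_1+f_1 J,f_2,\ldots,f_c)$, requires $f_2,\ldots,f_c$ to be a regular sequence on $R/J$ and on $R/(I_1+f_1 J)$; you call this ``once more a general-position statement'', but for a \emph{fixed} $V_2$ it can simply fail---nothing prevents some $f_i$ from lying in $J$, since $J/I_1$ is a height-one ideal of the Cohen-Macaulay ring $A$ and hence contains nonzerodivisors. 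So what you have actually established is the case $c=1$ (which is all the paper uses downstream: Theorems~\ref{thm-CM-with-G0} and \ref{thm-monomial-ideal} only invoke the single new hyperplane $t$) together with a correct diagnosis that the general case is not literally immediate from Lemma~\ref{lem-key}.
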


Our first main result is a generalization of Theorem \ref{thm-red-lift}.

\begin{theorem}
  \label{thm-CM-with-G0}
Let $X \subset \PP^n$ be an arithmetically Cohen-Macaulay subscheme that is generically Gorenstein.   Then, considering $X$ as a subscheme of $\PP^{n+1}$, $X$ can be linked in $\PP^{n+1}$ to a complete intersection of $\PP^{n+1}$.
\end{theorem}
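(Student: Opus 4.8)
The plan is to reduce the statement directly to Lemma \ref{lem-key} by extending the defining ideal of $X$ to the larger polynomial ring and taking the new variable $t$ as the required homogeneous nonzerodivisor. First I would fix notation: let $I \subset R = K[x_0,\ldots,x_n]$ be the saturated homogeneous ideal of $X$, and set $S = R[t] = K[x_0,\ldots,x_n,t]$, the homogeneous coordinate ring of $\PP^{n+1}$, where the hyperplane $\PP^n$ is the zero locus of $t$. Under the inclusion $X \subset \PP^n = V(t) \subset \PP^{n+1}$, the saturated homogeneous ideal of $X$ viewed as a subscheme of $\PP^{n+1}$ is precisely $(I,t) = IS + (t)S$. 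Thus it suffices to prove that $(IS, t) \subset S$ is glicci, since glicci ideals are by definition exactly those that can be G-linked to a complete intersection in finitely many steps.

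To apply Lemma \ref{lem-key} with the ideal $IS$ and the element $f = t$, I must verify its hypotheses for the quotient $S/IS \cong (R/I)[t]$: that it is Cohen-Macaulay and generically Gorenstein, and that $t$ is a nonzerodivisor on it. The Cohen-Macaulay property is immediate, since $R/I$ is Cohen-Macaulay because $X$ is arithmetically Cohen-Macaulay, and a polynomial extension of a Cohen-Macaulay ring is again Cohen-Macaulay. The nonzerodivisor claim is equally clear: writing $A = R/I$, the associated primes of $A[t]$ are the extensions $\fp A[t]$ of the associated primes $\fp$ of $A$, and none of these contains $t$.

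The one step requiring a genuine argument, and the main obstacle, is checking that $A[t] = S/IS$ is again generically Gorenstein. The minimal primes of $A[t]$ are exactly the extensions $\fp A[t]$ of the minimal primes $\fp$ of $A$, and the localization $(A[t])_{\fp A[t]}$ is a localization of $A_{\fp}[t]$. Since $X$ is generically Gorenstein, each $A_{\fp}$ is an artinian Gorenstein local ring; as a polynomial ring over a Gorenstein ring is Gorenstein, and localizations preserve the Gorenstein property, $(A[t])_{\fp A[t]}$ is Gorenstein. Hence $S/IS$ is generically Gorenstein.

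With all hypotheses of Lemma \ref{lem-key} verified, that lemma applied to $IS \subset S$ and $f = t$ shows that $(IS,t) = (I,t)$ is glicci. As noted above, this ideal is the saturated ideal of $X$ as a subscheme of $\PP^{n+1}$, so $X$ can be linked in $\PP^{n+1}$, through finitely many Gorenstein links, to a complete intersection of $\PP^{n+1}$, as desired.
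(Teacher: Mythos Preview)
Your proof is correct and follows essentially the same approach as the paper: extend $I_X$ to $S=R[t]$, observe that $S/I_X S\cong (R/I_X)[t]$ inherits the Cohen--Macaulay and generically Gorenstein properties, and then apply Lemma~\ref{lem-key} with $f=t$. You simply spell out in more detail the verifications (nonzerodivisor, generically Gorenstein for the polynomial extension) that the paper handles in a single sentence.
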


\begin{proof}
Let $t$ be a new variable so that $S := R[t]$ is the coordinate ring of $\PP^{n+1}$. By assumption, $R/I_X$ is Cohen-Macaulay and generically Gorenstein, thus so is $S/ I_X S$. Hence, Lemma \ref{lem-key} provides that $(I_X S , t) \subset S$ is glicci in $S$. Since $S/(I_X S , t) \cong R/I_X$ this concludes the argument.
\end{proof}

Notice that this result establishes Theorem \ref{thm-red-lift} because every reduced subscheme is generically Gorenstein.

\begin{remark}
  \label{rem:Hartshorne-exa}
In \cite{Hartshorne}, Hartshorne investigated the problem of whether arithmetically Cohen-Macaulay subschemes are glicci in the case of general sets of points in $\PP^3$. He showed that the answer is affirmative if the number of points is at most 19 and proposed a set of 20 general points as a possible counterexample. The problem has been studied further in \cite{HSS}, where it is shown that every general set of at least 56 points in $\PP^3$ cannot be linked to a complete intersection by strictly descending Gorenstein liaison or biliaison. In contrast, our Theorem \ref{thm-red-lift} guarantees that every set of points in projective space can be linked to a complete intersection if we pass to a higher-dimensional space.
\end{remark}

Now we turn to considering monomial ideals that are not necessarily generically Gorenstein.

\begin{proof}[Proof of Theorem \ref{thm-monomial-ideal}] \mbox{} \\
Again, let $t$ be a new variable, and set $S := R[t]$. By \cite{lift}, we can lift the ideal $I \subset R$ to a reduced ideal $J \subset S$ such that $S/(J, t) \cong R/I$ and $J : t = J$. In particular, $S/J$ is Cohen-Macaulay. Hence,  Lemma \ref{lem-key} yields that $(J, t)$ is glicci in $S$, and our claim follows because $(J, t) = (I, t) S$.
\end{proof}


\section{Unions of fat points}

In this section we will give an affirmative answer to Problem \ref{prob-contains-red} in the case where $X$ is a union of fat points in $\mathbb P^3$ (not necessarily of the same degree).  We recall that if $P$ is a point in $\mathbb P^n$ with saturated ideal $\mathfrak p$ then a {\em fat point} supported at $P$ is a zero-dimensional scheme defined by the saturated ideal $\mathfrak p^k$ for some $k \geq 1$.  Of course if $k >1$ then the fat point is non-reduced. Since a fat point is defined by a standard determinantal ideal, it is glicci by the Gaeta-type result in \cite{KMMNP}. However, we begin by giving a new proof for this result, which illustrates the arguments for our more general result in a particularly transparent manner. It uses ideas from \cite{MN-Adv}.  Recall the the $h$-vector of a $d$-dimensional arithmetically Cohen-Macaulay subscheme is the $d+1$-st difference of its Hilbert function.

\begin{lemma}
  \label{lem:one-fat-point}
A fat point of $\PP^n$ is glicci.
\end{lemma}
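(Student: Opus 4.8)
The plan is to prove the statement by induction on the multiplicity $k$, where the fat point is defined by $\mathfrak{p}^k$ and, after a linear change of coordinates, $\mathfrak{p}=(x_1,\dots,x_n)\subset R=K[x_0,\dots,x_n]$. The base case $k=1$ is immediate: $\mathfrak{p}$ is a complete intersection of $n$ linear forms, hence trivially glicci. For the inductive step I would produce a single \emph{direct} Gorenstein link carrying the fat point $V_k$ defined by $\mathfrak{p}^k$ onto the smaller fat point $V_{k-1}$ defined by $\mathfrak{p}^{k-1}$; since the latter is glicci by induction and $V_k$ is then directly G-linked to a glicci scheme, $V_k$ is glicci as well. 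This is what the preparatory remark on $h$-vectors is for: the $h$-vector of $\mathfrak{p}^k$ is $\big(1,n,\binom{n+1}{2},\dots,\binom{n+k-2}{n-1}\big)$, and the construction realizes it as exactly the ``first half'' of the symmetric $h$-vector of the arithmetically Gorenstein scheme used to link.

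The device for building the required Gorenstein ideal is Macaulay's inverse systems. Working in the Artinian reduction $R'=K[x_1,\dots,x_n]$ with dual $R''=K[X_1,\dots,X_n]$ under the contraction action $\circ$, I would choose a single form $F\in R''$ of even degree $s=2(k-1)$ and take $G$ to be the cone over the Artinian Gorenstein algebra $R'/\Ann_{R'}F$, an arithmetically Gorenstein zero-scheme of $\PP^n$ of codimension $n$. The catalecticant maps $\mathrm{Cat}_j\colon R'_j\to R''_{s-j}$, $g\mapsto g\circ F$, control everything: one has $(\Ann F)_j=\ker\mathrm{Cat}_j$, and $\mathrm{Cat}_j$ is the transpose of $\mathrm{Cat}_{s-j}$, so the two have equal rank. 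The crux is to choose $F$ so that the \emph{middle} catalecticant $\mathrm{Cat}_{k-1}$, a square matrix, is an isomorphism. For a general form $F$ of degree $2(k-1)$ this holds, the catalecticant determinant being a nonzero polynomial in the coefficients of $F$; an explicit candidate witness is $F=(X_1^2+\cdots+X_n^2)^{k-1}$.

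From invertibility of $\mathrm{Cat}_{k-1}$ the two facts I need follow formally. First, $(\Ann F)_{k-1}=0$, and since $\Ann F$ is an ideal in the domain $R'$ this forces $(\Ann F)_j=0$ for all $j\le k-1$ (if $0\neq g\in(\Ann F)_j$ and $0\neq h\in R'_{k-1-j}$ then $hg\in(\Ann F)_{k-1}=0$, contradicting that $R'$ is a domain); hence $\Ann F\subseteq(x_1,\dots,x_n)^k$ and therefore $I_G\subseteq\mathfrak{p}^k$, i.e.\ $V_k\subseteq G$. Second, transpose-duality turns these vanishings into surjectivity of $\mathrm{Cat}_j$ for every $j\ge k$, so that $\mathfrak{p}^k\circ F=\sum_{j\ge k}\mathrm{Cat}_j(R'_j)=\bigoplus_{m\le k-2}R''_m$, which is precisely the inverse system of $\mathfrak{p}^{k-1}$. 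By the inverse-system description of linkage, the residual ideal $I_{V'}=I_G:\mathfrak{p}^k$ has exactly this inverse system, so $V'=V_{k-1}$, which completes the link and the induction.

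The main obstacle is concentrated in the single quantitative input: exhibiting a form $F$ of degree $2(k-1)$ whose middle catalecticant is an isomorphism (equivalently, guaranteeing that $\Ann F$ is generated in degrees $\ge k$, so that $V_k\subseteq G$, while the linkage still returns the full smaller fat point rather than merely some scheme with the same $h$-vector). Once $F$ is fixed, verifying invertibility of its middle catalecticant is the only genuine computation; the containment $V_k\subseteq G$, the identification of the residual with $V_{k-1}$, and the passage from ``directly G-linked to something glicci'' to ``glicci'' are then formal. A secondary point to record is routine: that the cone over $R'/\Ann_{R'}F$ is arithmetically Gorenstein of codimension $n$, so that $I_G:\mathfrak{p}^k$ is a bona fide Gorenstein link in $\PP^n$.
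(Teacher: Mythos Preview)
Your argument is correct and produces exactly the same link as the paper --- a direct Gorenstein link from $\mathfrak p^{k}$ to $\mathfrak p^{k-1}$ via a compressed Artinian Gorenstein scheme supported at $P$ with $h$-vector $\big(1,n,\binom{n+1}{2},\dots,\binom{n+k-2}{k-1},\dots,n,1\big)$ --- but the construction of that Gorenstein scheme is genuinely different. The paper builds it geometrically: it takes a reduced arithmetically Gorenstein cone $G$ of lines through $P$ (from \cite{MN-Adv}), splits $G$ into geometrically linked ACM subcurves $C$ and $D$, and obtains the Gorenstein zero-scheme as $I_C+I_D$; the containment $I_C,I_D\subset\mathfrak p^{a}$ is checked by a vanishing-of-partials argument, and the identification of the residual with $\mathfrak p^{a-1}$ is read off from $h$-vectors together with the fact that all generators lie in $K[x_1,\dots,x_n]$. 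You instead produce the Gorenstein ideal algebraically as $\Ann_{R'}F$ for a form $F$ of degree $2(k-1)$ with invertible middle catalecticant, and both the containment and the identification of the residual drop out of inverse-system bookkeeping. Your route is slicker and entirely self-contained for a single fat point in any $\PP^n$; the paper's route, by contrast, is chosen precisely because the cones-of-lines picture is what drives the much harder Theorem~\ref{fat its} on unions of fat points in $\PP^3$, where one must control the Gorenstein scheme simultaneously at many support points --- something your generic $F$ does not obviously allow. One small caution: your explicit witness $F=(X_1^2+\cdots+X_n^2)^{k-1}$ degenerates in characteristic~$2$ (it becomes a power of a linear form, with rank-one catalecticants), so over small or positive-characteristic fields you should rely on the generic-form argument or give a different witness; the paper's construction likewise needs an infinite field for its ``general linear forms''.
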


\begin{proof}
Let $P \in \mathbb P^n$ be a point, and without loss of generality assume that $I_P = \langle x_1,\ldots,x_n \rangle$.  Let $Z \subset \PP^n$ be the fat point defined by the saturated ideal $\langle x_1,\ldots,x_n \rangle ^a$. It is well-known that the $h$-vector of $Z$ is
\[
\left (1,n,\binom{n+1}{2},\dots,\binom{n+a-2}{a-1} \right )
\]
By \cite[Theorem 4.3]{MN-Adv}, taking $2 a + n-2$ general linear forms in $I_P$, one can define an arithmetically Gorenstein curve $G \subset \PP^n$  that is a union of $2 \binom{n+a-2}{a-1}$ lines through $P$ with $h$-vector
\[
\left  (1,n-1,\binom{n}{2},\dots,\binom{n+a-3}{a-1}, \binom{n+a-3}{a-1},\ldots,\binom{n}{2}, n-1, 1 \right  ).
\]
Comparing with \cite[Theorem 4.1]{MN-Adv}, the curve $G$ contains an arithmetically Cohen-Macaulay subcurve $C$ consisting of $\binom{n+a-2}{a-1}$ lines with $h$-vector
\[
\left  (1,n-1,\binom{n}{2},\dots,\binom{n+a-3}{a-1} \right  ).
\]
Note that both, $G$ and $C$, are cones over suitable sets of points in the hyperplane $\{x_0 = 0\} \cong \PP^{n-1}$.

\bigskip

\noindent {\bf Claim.}
$Z \subset C$.

\medskip

We want to show $I_C \subset I_Z = I_P^a = (x_1,\ldots,x_n)^a$.  From the $h$-vector we see that $I_C = \langle F_1,\dots,F_{\binom{n+a-2}{a}}\rangle$, where the $F_i$ are homogeneous polynomials of degree $a$ in the variables $x_1,\ldots,x_n$. The latter is true by construction of $C$. Thus all the $i$th partial derivatives of the $F_j$, for $1 \leq i \leq a-1$, vanish at $P$.  Hence $I_C \subset I_Z$ as claimed.

\bigskip

Let $D$ be the complement of $C$ in $G$. This means that the curves $C$ and $D$ are geometrically linked by $G$. Furthermore, the curve $D$ is also a reduced union of lines through $P$, having the same $h$-vector as $C$.  In the same way as above, $Z \subset D$.

Hence $I_C + I_{D}$ defines an arithmetically Gorenstein zero-dimensional scheme $X$, supported at $P$.  Using \cite[Lemma 2.5]{MN-Adv}, we find that the $h$-vector of $X$ is:
\[
\begin{array}{cccccccccccccccccccccc}
\hbox{deg} & 0 & 1 & 2 & \dots & a-2 & a-1 & a & \dots & 2a-4 & 2a-3 & 2a-2 \\
\hbox{$h$-vector} & 1 & n & \binom{n+1}{2} & \dots & \binom{n+a-3}{a-2} & \binom{n+a-2}{a-1} & \binom{n+a-3}{a-2} & \dots & \binom{n+1}{2} & n & 1

\end{array}
\]
Thus $X$ links $Z$ to a zero-dimensional scheme $Z'$ supported at $P$ with $h$-vector
\[
\left (1,n,\binom{n+1}{2},\dots,\binom{n+a-3}{a-2} \right ).
\]
By construction, the generators of $I_X$ are polynomials only in $x_1,\ldots,x_n$.  Thus $I_{Z'} = I_P^{a-1}$.

Continuing inductively, we see that this construction provides a series of links from $Z$ down to $P$ itself, which is reduced.  This completes the proof.
\end{proof}

If $n=3$, the constructions in the above proof become a lot easier. In fact, then the above arithmetically Gorenstein scheme $G$ is simply a complete intersection of forms of degree $a$ and $a+1$, respectively, both of which are products of linear forms in $I_P$. This simplicity is the reason why we restrict ourselves to the case $n=3$ for showing  how to pass from the case of one fat point to a union of fat points.

\begin{theorem} \label{fat its}
Let $P_1,\dots,P_s $ be points in $\mathbb P^3$, with defining ideals $\mathfrak p_1 ,\dots, \mathfrak p_s$ respectively.  Choose positive integers $b_1 ,\dots, b_s$ and let $Z$ be the scheme defined by the saturated ideal $\mathfrak p_1^{b_1}  \cap \dots \cap \mathfrak p_s^{b_s}$.  Then $Z$ is G-linked in $\mathbb P^3$ to a reduced scheme in a finite number of steps.
\end{theorem}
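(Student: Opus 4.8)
The plan is to mimic the single fat point argument of Lemma \ref{lem:one-fat-point} at all of the support points simultaneously, and to induct on the maximal multiplicity $a := \max_i b_i$. If $a = 1$ then $Z$ is reduced and there is nothing to prove, so assume $a \geq 2$. The goal of one inductive step is to produce a single Gorenstein link carrying $Z$ to a scheme $Z'$ that is again a union of fat points but whose maximal multiplicity is at most $a - 1$; after at most $a-1$ such steps all multiplicities drop to $1$ and we reach a reduced scheme.

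To build one step, I would construct, just as in Lemma \ref{lem:one-fat-point} but now over all points at once, a complete intersection curve $G = V(F,H) \subset \PP^3$, where $F$ and $H$ are products of linear forms chosen so that through each $P_i$ there pass exactly $b_i$ of the planes dividing $F$ and exactly $b_i + 1$ of the planes dividing $H$ (the remaining planes taken general). Locally at $P_i$ this reproduces the cone of $b_i(b_i+1)$ lines from the one-point construction; globally $G$ is a reduced union of lines consisting of these cones together with ``extraneous'' lines coming from planes attached to different points. As in the one-point case I would split $G$ into two geometrically linked ACM subcurves $C$ and $D$: at each $P_i$ one distributes the $b_i(b_i+1)$ cone lines into the two ACM half-cones of Lemma \ref{lem:one-fat-point} (each of which contains $\mathfrak p_i^{b_i}$), and one distributes the extraneous lines between $C$ and $D$. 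Since $Z$ is locally contained in each half-cone, $Z \subset C$ and $Z \subset D$, hence $Z \subset X$, where $X := C \cap D$ is the scheme defined by $I_C + I_D$.

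Because $C$ and $D$ are ACM and geometrically linked by the complete intersection $G$, the intersection $X$ is arithmetically Gorenstein, and its $h$-vector can be computed from those of $C$, $D$, $G$ via \cite[Lemma 2.5]{MN-Adv}, exactly as in Lemma \ref{lem:one-fat-point}. Linking $Z$ by $X$ yields $Z' = I_X : I_Z$. Locally at each $P_i$ the configuration is identical to the one-point situation, so the link sends $\mathfrak p_i^{b_i}$ to $\mathfrak p_i^{b_i-1}$ (and deletes the point when $b_i = 1$); the remaining points of $X$ are the reduced ``corner'' points where a line of $C$ meets a line of $D$, and these lie off the support of $Z$. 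Hence $Z'$ is again a union of fat points, now with maximal multiplicity $a-1$, and the induction proceeds.

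The main obstacle is the global geometry of the splitting. One must choose the planes (hence the extraneous lines) and the partition $G = C \cup D$ generally enough that, first, both $C$ and $D$ are genuine ACM unions of lines: the half-cones at each $P_i$ are ACM, but the extraneous lines must be added without destroying this, which I would control through a general hyperplane section together with the $h$-vector criteria for unions of lines in \cite{MN-Adv}; and second, every intersection of a line of $C$ with a line of $D$ away from the $P_i$ is transverse, so that $X$ is reduced off the support of $Z$ and $Z'$ acquires only simple points rather than new non-reduced structure. Granting these general-position statements, the local computation at each $P_i$ is word-for-word Lemma \ref{lem:one-fat-point}, and the symmetric $h$-vector bookkeeping certifies that $X$ is arithmetically Gorenstein; the essential new content beyond the one-point case is precisely this simultaneous ACM splitting in the presence of several cones.
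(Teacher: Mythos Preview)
Your strategy differs from the paper's: you try to reduce \emph{all} multiplicities by one in a single Gorenstein link, whereas the paper works on one fat point at a time and uses a \emph{pair} of links so that the multiplicity at the chosen point drops by two while the other fat points return to their original multiplicities (by linking twice through the same local complete intersection at those points). The paper never needs to split a global complete intersection curve into two ACM halves containing half-cones at several points simultaneously. Instead it builds the first curve $Y$ as a \emph{basic double link} $I_Y = Q\cdot I_C + \langle F\rangle$, where $C$ is only the half-cone at the single point being processed; this guarantees $Y$ is ACM automatically, and the residual $W$ in the complete intersection $\langle F,G\rangle$ is then ACM by liaison.

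The step you flag as ``the main obstacle'' is a genuine gap. You need your global $C$ (a union of half-cones at every $P_i$ together with a share of the extraneous lines) to be ACM, and you propose to arrange this by general position and $h$-vector criteria. But there is no mechanism offered for why such a partition exists: a union of several cones with vertices at distinct points, glued by extra lines, is not ACM for free, and the criteria in \cite{MN-Adv} concern a single cone over points in a hyperplane, not a multi-vertex configuration. The paper's basic double link construction is precisely the device that circumvents this, at the cost of handling only one vertex per step and needing the second link to undo the collateral damage at the remaining $P_i$. Without a replacement for that device, your simultaneous-reduction plan does not go through as written.
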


\begin{proof}
In this proof we will abuse notation somewhat and use the same name for a plane and for the linear form defining it.

Our approach will be to apply a series of {\em evenly many} Gorenstein links with the following strategy.  We will focus on one point at a time, and {\em locally at that point} we will be reducing it via the links described in Lemma \ref{lem:one-fat-point}  for $s=1$.  However, globally we will leave any other existing points the way they are (i.e. any reduced points stay reduced, and any fat points retain their multiplicity), and will will pick up new points, but they will all be reduced. Thus at the end we will have removed any non-reduced structure on the original points, and picked up {\em many} new reduced points, but we will have produced a reduced scheme in the even Gorenstein class of $Z$.

For compatibility with Lemma \ref{lem:one-fat-point}, set $b_1 = a$, $P_1 = P$, and $I_P = \mathfrak p$.  Let $Z$ be the scheme defined by $I_Z = \mathfrak p^a \cap \mathfrak p_2^{b_2} \cap \dots \cap \mathfrak p_s^{b_s}$, where $a \geq 2$ and $b_i \geq 1$ for $2 \leq i \leq s$.  Let
\[
\begin{array}{ccl}
A & = & \hbox{product of $a$ general linear forms in $\mathfrak p$} \\
B & = & \hbox{product of $a+1$ general linear forms in $\mathfrak p$}
\end{array}
\]
The ideal $\langle A,B \rangle$ is a complete intersection consisting of $a^2+a$ lines through $P$.  As pointed out above, there is a subset $C$ consisting of $\binom{a+1}{2}$  of these lines with $h$-vector $(1,2,3, \dots, a)$. Note that the curve $C$  is arithmetically Cohen-Macaulay (being a cone over a finite set of points).  Let $D$ be the residual in the complete intersection defined by $\langle A,B \rangle$.  Notice that $D$ is also arithmetically Cohen-Macaulay and has the same $h$-vector as $C$.  By the claim above, we have $I_C \subset \mathfrak p^a$ and $I_D \subset \mathfrak p^a$.

Choose general planes $L_{i,j} \in \mathfrak p_i$, where $2 \leq i \leq s$ and $1 \leq j \leq b_i$.  Notice that $\prod_j L_{i,j} \in \mathfrak p_i^{b_i}$.  Let
\[
F = A \cdot \prod_{i,j} L_{i,j}.
\]
Notice that $\deg F = a + \sum_{i=2}^s b_i$ is the sum of the powers of the ideals of the points, and that $F \in I_C$ (hence also in $I_Z$).

Choose general planes $M_{i,j} \in \mathfrak p_i$, where $2 \leq i \leq s$ and $1 \leq j \leq b_i$.  Let
\[
Q = \prod_{i,j} M_{i,j}.
\]
Define a curve $Y$ with defining ideal
\[
I_Y = Q \cdot I_C + \langle F \rangle.
\]
This is a basic double link (see for instance \cite{migbook}), and since $C$ is arithmetically Cohen-Macaulay, it follows that so is $Y$. Furthermore, by the genericity of $L_{i, j}$ and $M_{i, j}$ and comparing degrees, we  see that
\[
I_Y = I_C \cap \langle F, Q \rangle.
\]
Thus,
$Y$ is a union of lines with at most two passing through any point other than the $P_i$ or $P$.  It also follows that $Z \subset Y$.
Through each point $P_i$ ($2 \leq i \leq s$) pass $b_i^2$ lines of $Y$, namely the {\em complete intersection} of $\prod_j L_{i,j}$ and $\prod_j M_{i,j}$. The components of $Y$ containing $P$ form the curve $C$.

Choose general planes $N_{i,j} \in \mathfrak p_i$, where $2 \leq i \leq s$ and $1 \leq j \leq b_i$.  Let
\[
G = QB \cdot \prod_{i,j} N_{i,j}.
\]
We make the following observations about the ideal $\langle F,G \rangle$.

\begin{enumerate}

\item It is a complete intersection defining a union of lines.

\item An any point other than $P$ or one of the $P_i$, no more than two of these  lines pass.

\item At any of $P_i \in \{P_2,\dots, P_s\}$, locally it is a complete intersection of type $(b_i, 2b_i)$, namely the complete intersection of $\prod_j L_{i,j}$ and $\prod_j M_{i,j} \cdot \prod_j N_{i,j}$.

\item At $P$ it is a complete intersection of type $(a,a+1)$, namely that defined by $\langle A,B \rangle$.

\item $F \in I_Y$ by construction and $G \in I_Y$ since $B \in I_C$ so $QB \in Q \cdot I_C$; hence $\langle F,G\rangle \subset I_Y$.

\end{enumerate}

Thus $\langle F,G \rangle$ links $Y$ to a residual curve $W$, which is arithmetically Cohen-Macaulay since $Y$ is.  $W$ is again a union of lines, with at most two meeting at any point away from $P$ or one of the $P_i$.  Locally at $P$, the components of $W$ coincide with the components of $D$.  At any $P_i \in \{ P_2,\dots,P_s \}$, $W$ is a complete intersection of type $(b_i,b_i)$; namely it is the complete intersection of $\prod_j L_{i,j}$ and  $\prod_j N_{i,j}$.

By a similar reasoning as above, $Z \subset W$.  Furthermore, the link joining $Y$ and $W$ is a geometric link.  Thus by standard liaison theory (see e.g. \cite{migbook}), $I_Y + I_W$ is the saturated ideal of a codimension three Gorenstein scheme, which we will call $Gor$; and since $I_Y \subset I_Z$ and $I_W \subset I_Z$, we obtain $Z \subset Gor$.

Locally at any $P_i \in \{ P_2,\dots,P_s \}$, $Gor$ is the complete intersection
\[
\left \langle \prod_j L_{i,j}, \prod_j M_{i,j}, \prod_j N_{i,j} \right \rangle.
\]
Locally at $P$, $Gor$ is precisely the Gorenstein scheme $X$ described in the proof of Lemma \ref{lem:one-fat-point} in the case $n=3$.  Everywhere else, $Gor$ is reduced.

Since $I_{Gor} \subset I_Z$, $Gor$ links $Z$ to a residual zero-dimensional scheme $Z'$.  Thanks to the work done in the case $s=1$, the component of $Z'$ supported at $P$ is just defined by $\mathfrak p^{a-1}$.  The components supported at $P_2,\dots, P_s$ are non-reduced in general (and it will not matter to us what they look like there), but everywhere else $Z'$ is reduced.  Denote by $\{R_k\}$ the remaining points other than $P$ and $P_2,\dots,P_s$. It does not matter how many points $R_k$ there are; let us suppose $1 \leq k \leq \tau$.

Now we will perform a second link, from $Z'$ to a new zero-dimensional scheme $Z''$ which will have important properties for us.  The approach is similar, but with small differences.
Let
\[
\begin{array}{rcl}
A' & = & \hbox{product of $a$ general linear forms in $\mathfrak p$} \\
B' & = & \hbox{product of $a-1$ general linear forms in $\mathfrak p$} \\
\end{array}
\]
Then $\langle A',B' \rangle$ is a complete intersection defining a reduced union of $a^2 - a$ lines through $P$.  Let $C'$ be a subset of $\binom{a}{2}$ of these lines with $h$-vector $(1,2,3,\dots,a-1)$.  $C'$ is an arithmetically Cohen-Macaulay curve.   Let $D'$ be the residual.  $D'$ is also arithmetically Cohen-Macaulay, with the same $h$-vector.  As earlier, $I_{C'} \subset \mathfrak p^{a-1}$ and $I_{D'} \subset \mathfrak p^{a-1}$.

For each $1 \leq k \leq \tau$, let $L_k', M_k'$ and $N_k'$ be general linear forms vanishing on $R_k$.  Let
\[
\begin{array}{rcl}
F' & = & A' \cdot \prod_{i,j} L_{i,j} \cdot \prod_{k} L_k' \\ \\
Q' & = & Q \cdot \prod_k M_k' \\ \\
I_{Y'} & = & Q' \cdot I_{C'} + \langle F' \rangle
\end{array}
\]
As before, $Y'$ is an arithmetically Cohen-Macaulay union of lines.

The components of $Y'$ passing through $P$ are precisely $C'$.  The components of $Y'$ passing through $P_2,\dots,P_s$ form a complete intersection of lines (namely at $P_i$ they are defined by $\langle \prod_j L_{i,j}, \prod_j M_{i,j} \rangle$).  Through each $R_k$ there is just one line (namely that defined by $\langle L_k', M_k' \rangle$).

Now let
\[
G' = Q' \cdot B' \cdot \prod_{i,j} N_{i,j} \cdot \prod_k N_k'.
\]
Since both $F'$ and $G'$ are in $I_{Y'}$, and by the genericity of their choices, $\langle F', G' \rangle$ provides a geometric link of $Y'$ to a residual curve, $W'$.  Locally at $P$, $W'$ consists of the components of $D'$.  Locally at $P_i$, $2 \leq i \leq s$, $W'$ consists of the same complete intersection that we had for $W$, namely the complete intersection of type $(b_i,b_i)$ given by $\prod_j L_{i,j}$ and  $\prod_j N_{i,j}$. Locally at $R_k$, $W'$ is the complete intersection of type $(1,1)$ (i.e. a single line) given by $ \langle L_k, N_k \rangle$.

Now, since $Y'$ and $W'$ are geometrically linked, the sum of their ideals is an arithmetically Gorenstein ideal, $Gor'$. At $P$ it is the intersection of the linked arithmetically Cohen-Macaulay curves $C'$ and $D'$, hence the component of $Gor'$ at $P$ is itself arithmetically Gorenstein, with $h$-vector
\[
\left ( 1,3,6,\dots, \binom{a-1}{2}, \binom{a}{2}, \binom{a-1}{2}, \dots, 6,3,1 \right ).
\]
The component of $Gor'$ supported at $P_i$ is the complete intersection
\[
\langle \prod_j L_{i,j}, \prod_j M_{i,j} \prod_j N_{i,j} \rangle.
\]
The component of $Gor'$ supported at $R_k$ is reduced.  There are also other reduced components of $Gor'$.

Now, $Gor'$ provides a link of $Z'$ to a residual zero-dimensional scheme $Z''$.  The component of $Z''$ supported at $P$ has defining ideal $\mathfrak p^{a-2}$ thanks to the calculations in the proof of Lemma \ref{lem:one-fat-point}.  The component of $Z''$ at $P_i$, $2 \leq i \leq s$, are the original fat points, $\mathfrak p_i^{b_i}$, since locally we have linked twice using the same complete intersection.  $Z''$ has no components supported at any of the $R_k$.  Everywhere else, $Z''$ is reduced.

Thus in two links we have passed from the ideal $\mathfrak p^a \cap \mathfrak p_2^{b_2} \cap \dots \cap \mathfrak p_s^{b_s}$ to a zero-dimensional scheme consisting of the form
\[
\mathfrak p^{a-2} \cap \mathfrak p_2^{b_2} \cap \dots \cap \mathfrak p_s^{b_s} \cap J
\]
where $J$ is the ideal of a reduced set of points. Thus, depending on whether $a$ is even or odd, we can reduce in a finite (even) number of Gorenstein links to a zero-dimensional scheme where the components supported at  $P_2,\dots, P_s$ are the original fat points, the component at $P$ is either reduced or empty, and all other points are reduced.  We can then do the same thing at each of $P_2,\dots,P_s$, obtaining in the end a reduced zero-dimensional scheme (of very large degree).
\end{proof}

In the above result it is crucial that we are not bound to use complete intersections for the links. In fact, by the results in \cite{PU} it not possible to link any union of fat points that is not reduced to a union of simple points by using only complete intersections for the links.

\begin{corollary}
  \label{cor:fat-points}
Every union of fat points in $\PP^3$ can be linked in $\PP^4$ to a complete intersection of $\PP^4$ in a finite number of steps.
\end{corollary}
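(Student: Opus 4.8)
The plan is to combine the two main achievements of the paper: Theorem~\ref{fat its}, which reduces an arbitrary union of fat points to a reduced scheme by Gorenstein links inside $\PP^3$, and Theorem~\ref{thm-red-lift}, which links a reduced (hence arithmetically Cohen--Macaulay and generically Gorenstein) scheme to a complete intersection once we pass to $\PP^4$. The only thing that has to be added is a bridge allowing the links produced in $\PP^3$ to be read as links in $\PP^4$, so that the two chains can be concatenated into a single finite sequence of Gorenstein links taking place entirely in $\PP^4$.

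First I would record the following promotion principle. Regard $\PP^3$ as the hyperplane $\{t=0\}$ of $\PP^4$, with $R=K[x_0,\dots,x_3]$ and $S=R[t]$, so that a subscheme $V\subset\PP^3$ with saturated ideal $I_V\subset R$ acquires the saturated ideal $(I_V,t)\subset S$ when viewed in $\PP^4$. Suppose $V_1$ and $V_2$ are directly Gorenstein-linked in $\PP^3$ by an arithmetically Gorenstein scheme $G$, i.e. $I_G\subseteq I_{V_1}\cap I_{V_2}$ in $R$ and $I_G:I_{V_1}=I_{V_2}$, $I_G:I_{V_2}=I_{V_1}$. Then $(I_G,t)$ is arithmetically Gorenstein of codimension one greater, because $S/(I_G,t)\cong R/I_G$ is Gorenstein and $t$ is a nonzerodivisor on $S/I_G S$. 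Moreover, writing any $f\in S$ as $f=\sum_k f_k t^k$ with $f_k\in R$ and testing membership against the generators of $I_{V_1}\subset R$, one sees that $f\cdot I_{V_1}\subseteq(I_G,t)$ holds exactly when $f_0\in I_G:I_{V_1}=I_{V_2}$; since the condition $f\cdot t\in(I_G,t)$ is automatic, this yields the identity
\[
(I_G,t):(I_{V_1},t)=(I_{V_2},t),
\]
and symmetrically with the roles reversed. Hence $V_1$ and $V_2$ are directly Gorenstein-linked in $\PP^4$ by $(I_G,t)$.

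With this in hand the corollary assembles quickly. Starting from the given union of fat points $Z\subset\PP^3$, Theorem~\ref{fat its} furnishes a finite chain of direct Gorenstein links in $\PP^3$ joining $Z$ to a reduced zero-dimensional scheme $Z'$; promoting each of these links by the principle above gives a finite chain of Gorenstein links in $\PP^4$ from $Z$ to $Z'$. The scheme $Z'$, being reduced, is arithmetically Cohen--Macaulay and generically Gorenstein, so Theorem~\ref{thm-red-lift} (the special case of Theorem~\ref{thm-CM-with-G0}) provides a further finite chain of Gorenstein links in $\PP^4$ from $Z'$ to a complete intersection of $\PP^4$. Concatenating the two chains proves the claim.

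The step that deserves the most care---and the only genuine obstacle---is the promotion principle: one must check that $(I_G,t)$ really is the saturated ideal of an arithmetically Gorenstein subscheme of $\PP^4$ and that the colon computation above is valid, the latter resting on the flatness of $S$ over $R$ and on the fact that reading off the $t$-constant term detects membership in $(I_G,t)$ for elements multiplied by forms of $R$. It is worth emphasizing why the detour through Theorem~\ref{fat its} is unavoidable: a genuine fat point $\mathfrak p^b\subset\PP^3$ with $b\ge 2$ has socle dimension $\binom{b+1}{2}>1$ at its support, so $Z$ itself is not generically Gorenstein and Lemma~\ref{lem-key} cannot be applied with $I=I_Z S$ and $f=t$; one must first remove the non-reduced structure before invoking the lifting construction.
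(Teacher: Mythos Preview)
Your proof is correct and follows the same route as the paper, which simply writes ``Combine Theorems \ref{fat its} and \ref{thm-CM-with-G0}.'' You have made explicit the one step the paper leaves tacit, namely that a direct Gorenstein link in $\PP^3$ promotes to a direct Gorenstein link in $\PP^4$ via $(I_G,t)$; your verification of the colon identity and of the Gorenstein property of $S/(I_G,t)\cong R/I_G$ is accurate and is exactly what is needed to concatenate the two chains inside $\PP^4$.
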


\begin{proof}
Combine Theorems \ref{fat its} and \ref{thm-CM-with-G0}.
\end{proof}

Using the arguments for establishing Lemma \ref{lem:one-fat-point}, we believe that the ideas in the proof of Theorem \ref{fat its} can be extended to show that every union of fat points in $\PP^n$ can be linked to a reduced subscheme of $\PP^n$. However, the details seem to become very demanding.



\begin{thebibliography}{ll}

\bibitem{BrHe}
W. Bruns and  J. Herzog: {\emph{Cohen-Macaulay Rings}}, Cambridge
Studies  in Advanced Mathematics {\bf 39}, Cambridge University
Press,  Cambridge, 1998.



\bibitem{CDH}
M.\ Casanellas, E.\ Drozd, and R.\ Hartshorne, {\em Gorenstein liaison and
  ACM sheaves}, J.\ Reine Angew.\ Math.\ {\bf 584} (2005), 149--171.

\bibitem{Gorla}
E. Gorla,  {\em A generalized Gaeta's Theorem}, Compositio Math. {\bf 144} (2008), 689--704.


\bibitem{Hartshorne}
R. Hartshorne, {\em Some examples of Gorenstein liaison in codimension three}, Collect. Math. {\bf 53}  (2002),  21--48.


\bibitem{HSS} R.\ Hartshorne, I.\ Sabadini, and E.\ Schlesinger,
{\em Codimension 3 arithmetically Gorenstein subschemes of
projective  $N$-space}, Ann.\ Inst. Fourier (Grenoble)  {\bf 58}
(2008), 2037--2073.

\bibitem{HU-Annals}
C. Huneke, B. Ulrich,  {\em The structure of linkage}, Ann. of Math. {\bf 126} (1987), 221--275.

\bibitem{HU} C. Huneke and B. Ulrich, {\em Liaison of monomial ideals}, Bull. London Math. Soc. {\bf 39} (2007), 384--392.

\bibitem{KMMNP} J.\ Kleppe, J.\ Migliore, R.M.\ Mir\'o-Roig, U.\ Nagel, and C.\
Peterson, {\em Gorenstein liaison, complete intersection liaison invariants and
unobstructedness}, Mem.\ Amer.\ Math.\ Soc.\ {\bf 154} (2001), no.\
  732.

\bibitem{migbook} J.\ Migliore, {\em Introduction to Liaison Theory and
Deficiency Modules},  Progress in Mathematics {\bf 165}, Birkh\"auser, 1998.

\bibitem{lift} J.\ Migliore and U.\ Nagel,
{\em Lifting monomial ideals},  Comm.\ Algebra {\bf 28} (2000), Special volume in honor of R.\ Hartshorne, 5679-5701.

\bibitem{Compositio} J.\ Migliore and U.\ Nagel,
{\em Monomial ideals and the Gorenstein liaison
    class of a complete intersection}, Compositio Math.\ {\bf 133} (2002), 25--36.

\bibitem{MN-Adv}
J.\ Migliore and U.\ Nagel, {\em Reduced arithmetically Gorenstein schemes and
    simplicial polytopes with
    maximal Betti numbers}, Adv.\ Math.\ {\bf 180}
    (2003), 1--63.



\bibitem{NR}
U. Nagel and T. R\"omer, {\em Glicci simplicial complexes}, J.\ Pure Appl.\ Algebra {\bf 212} (2008), 2250--2258.


\bibitem{PU}
C. Polini and B. Ulrich, {\em Linkage and reduction numbers}, Math. Ann. {\bf 310} (1998), 631--651.

\bibitem{rao-invent} P.\ Rao, {\em Liaison among Curves in} $\mathbb P^{3}$, Invent.\ Math.\
{\bf 50} (1979), 205--217.


\end{thebibliography}
\end{document}